 \newcommand{\dist}{\mathrm{d} }
 \newcommand\calP{{\mathcal P}}
 \newtheorem{thm}{Theorem}[section]
 \newtheorem{cor}[thm]{Corollary}
 \newtheorem{lem}[thm]{Lemma}
 \theoremstyle{definition}
 \newtheorem{defn}[thm]{Definition}
 \newtheorem{rem}[thm]{Remark}
 \newcommand{\diam}{\mathrm{diam}}
 \newcommand{\Ga}{\Gamma}
 \newcommand{\Si}{\Sigma}
 \newcommand{\Aut}{\mathrm{Aut}}
 \newcommand{\PGaL}{\mathrm{P\Gamma L}}
 \newcommand{\PGL}{\mathrm{PGL}}
 \newcommand{\PSL}{\mathrm{PSL}}
 \newcommand{\PSU}{\mathrm{PSU}}
 \newcommand{\PSp}{\mathrm{PSp}}
 \newcommand{\GF}{\mathrm{GF}}
 \newcommand{\PG}{\mathrm{PG}}
 \newcommand{\G}{\mathrm{G}}
 \newcommand{\Inc}{\mathrm{Inc}}
 \def\S{\textsf{S}}
 \def\ssig{\S(\Sigma)}
 \def\HoSi{\mathrm{HoSi}}
 \def\Petersen{\mathrm{Petersen}}
\title{A classification of graphs whose subdivision graphs are locally $G$-distance
transitive}
\author{Ashraf Daneshkhah and Alice Devillers}
\begin{document}
\maketitle

\begin{abstract}
The subdivision graph $\ssig$ of a connected graph $\Sigma$ is constructed by adding
a vertex in the middle of each edge. 
In a previous paper written with Cheryl E. Praeger, we characterised the graphs $\Sigma$ such that $\ssig$ is locally $(G,s)$-distance transitive for $s\leq 2\, \diam(\Sigma)-1$ and some $G\leq \Aut(\Sigma)$.
In this paper, we solve the remaining cases by classifying all
the graphs $\Sigma$ such that the subdivision graphs is locally
$(G,s)$-distance transitive for $s\geq 2\, \diam(\Sigma)$ and some $G\leq \Aut(\Sigma)$. In particular,  their subdivision graph are always locally 
$G$-distance transitive, except for the complete graphs.
\end{abstract}

\section{Introduction}

In this paper all graphs are simple, undirected and connected, that is to say, a
graph $\Sigma$ consists of a  vertex-set $V\Sigma$ and a subset  $E\Sigma$ of
unordered pairs from $V\Sigma$, called edges, and for any two vertices $u,v$ there is a
`path' $\{u_0,u_1,\dots,u_m\}$ with each $\{u_i,u_{i+1}\}$ an edge and $u_0=u,
u_m=v$.

The {\it subdivision graph} $\S(\Sigma)$ of a graph $\Sigma$  is defined as the
graph with vertex set $V\Sigma \cup E\Sigma$ and edge set
\begin{align*}
  \{\{x, e\}| \ x\in V\Sigma, \  e\in E\Sigma, \ x\in e\}.
\end{align*}
Informally, $\ssig$ is the graph obtained from $\Sigma$ by `adding a vertex' in the
middle of each edge of $\Sigma$. Note that $\ssig$ is bipartite with bipartition
$V\Sigma$ and $E\Sigma$. We have proved in \cite{DDP2009} that the diameter of $\ssig$ is equal to $2\, \diam(\Sigma)+\delta$ for some $\delta\in \{0,1,2\}$.
Observe that $\Aut(\Si)$ can be identified with a subgroup of $\Aut(\ssig)$, and we proved in \cite{DDP2009} that, with the exception of $\Sigma$ being a cycle, we have   $\Aut(\Si)=\Aut(\ssig)$.
From now on, we will always consider groups $G\leq \Aut(\Sigma)$, acting on $\Si$ as well as on $\ssig$.
Various symmetric properties of subdivision graphs were studied in \cite{DDP2009}, in particular their local $(G,s)$-distance transitivity.

Let $\Ga$ be a graph with diameter $D$, and let $G$ be a subgroup of $\Aut
\Ga$.
We denote by $\Ga_i(x)$ the set of vertices of $\Ga$ at distance $i$ from $x$.
 For $1\leqslant s  \leqslant D$, we say that $\Ga$ is \emph{locally
$(G,s)$-distance transitive} if for all $x\in V\Ga$, $G_x$ is transitive on
$\Ga_i(x)$  for  $1\leqslant i\leqslant s$, and  we say that $\Ga$ is
\emph{locally  $G$-distance transitive} if it is locally
$(G,D)$-distance transitive. It is known that, for a
locally $(G,1)$-distance transitive graph $\Ga$, either $G$ is transitive on $V\Ga$, or $G$ has two orbits on vertices and the graph is bipartite (see Lemma 2.1 of \cite{DDP2009}). 
%This follows from the fact that $G$ is transitive on edges  and  \cite[Lemma
%3.2.1]{Royle2001}). 
%There are many locally $G$-distance transitive graphs which are
%not  $G$-distance transitive. For example the complete bipartite graph $K_{m,n}$,
%where $m\neq n$, is locally $G$-distance transitive but not $G$-distance transitive,
%for $G=\Aut K_{m,n}=S_m\times S_n$.

% In
%particular, in \cite[Proposition 1.1]{DDP2009}, we have shown that either $\Sigma$
%is a cycle and $\Aut(\ssig)=\Aut(\Sigma).2$, or $\Aut(\ssig)=\Aut(\Sigma)$. This
%shows that if the valency of $\Sigma$ is at least $3$, then
%$\Aut(\ssig)=\Aut(\Sigma)$. 
In \cite{DDP2009}, we characterised the graphs $\Sigma$ such that $\Ga=\ssig$ is locally $(G,s)$-distance transitive for $s\leq 2\, \diam(\Sigma)-1$. 
We started exploring the case where $s\geq  2\, \diam(\Sigma)$, and classified all such graphs with $s$ at most 5. 
%We also gave a classification Theorem  of graphs
%whose subdivision graphs are locally $(G,s)$-distance transitive with $s\leq 5$ or
%$s\geq 15$, for $G\leq \Aut(\Sigma)$.  
In this paper, we extend this result to all $s$.

We now state  the main result of this paper.

\begin{thm}\label{char}
Let $\Sigma$ be a connected graph of diameter $d\geq 2$, and let $G\leq\Aut(\Sigma)$ . Let also  $\ssig$ be the subdivision graph of $\Si$. Then the following are equivalent:
\begin{enumerate}
\item[(a)] $\ssig$ is locally $(G,2d)$-distance transitive;
\item[(b)] $\Sigma$ and $G$ are as in the first two columns of Table {\rm\ref{table1}};
\item[(c)] $\ssig$ is locally $G$-distance transitive.
\end{enumerate}
\end{thm}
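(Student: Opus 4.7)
The three conditions reduce naturally to the cycle (c) $\Rightarrow$ (a) $\Rightarrow$ (b) $\Rightarrow$ (c). The implication (c) $\Rightarrow$ (a) is immediate, since by \cite{DDP2009} the diameter of $\ssig$ equals $2d+\delta$ for some $\delta\in\{0,1,2\}$, and local $G$-distance transitivity automatically gives transitivity of $G_x$ on $\ssig_i(x)$ for every $i\le 2d$.

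For (a) $\Rightarrow$ (b), the plan is a two-stage reduction. First, hypothesis (a) trivially implies local $(G,2d-1)$-distance transitivity, so the classification of \cite{DDP2009} cuts the possibilities for $(\Sigma,G)$ down to a concrete candidate list. Second, I would impose the new constraint that $G_x$ acts transitively on $\ssig_{2d}(x)$, applied separately to representatives $x\in V\Sigma$ and $x\in E\Sigma$. For $x\in V\Sigma$, the set $\ssig_{2d}(x)$ sits inside $V\Sigma$ and is exactly the diametral set $\Sigma_d(x)$, so the new condition translates into a concrete transitivity requirement on the action of $G_x$ on $\Sigma$ itself. For $x\in E\Sigma$, using the bipartite structure of $\ssig$, the analogous analysis constrains how the edge stabiliser acts on edges that are ``far away'' from $x$. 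Filtering each candidate family from \cite{DDP2009} by these two conditions should cut the list down to exactly the entries of Table \ref{table1}.

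For (b) $\Rightarrow$ (c), I would verify case by case, for each row of Table \ref{table1}, that $\ssig$ is actually locally $(G,2d+\delta)$-distance transitive, where $\delta$ depends on the row. Concretely, this means checking transitivity of $G_x$ on $\ssig_{2d+1}(x)$ and, when $\delta=2$, also on $\ssig_{2d+2}(x)$, for $x\in V\Sigma$ and $x\in E\Sigma$. In most cases this should be extracted from the symmetry properties already forced by membership in the table, exploiting the identification $\Aut(\Sigma)=\Aut(\ssig)$ recalled from \cite{DDP2009} and the fact that the lower-distance transitivity conditions have already been established in \cite{DDP2009}.

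The main obstacle will be the case analysis in (a) $\Rightarrow$ (b): starting from the list produced by the previous paper, deciding exactly when $G_x$ is transitive on the diametral set $\Sigma_d(x)$ is a genuinely delicate combinatorial/group-theoretic requirement that may fail even when all lower-distance transitivity properties hold, so each family has to be examined individually. A secondary difficulty is correctly tracking the value of $\delta$ throughout (b) $\Rightarrow$ (c), since it governs exactly how far transitivity must be verified and may differ between rows of Table \ref{table1}; one expects that the rows splitting according to $\delta\in\{0,1,2\}$ will naturally organise the verification.
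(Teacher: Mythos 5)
Your implications (c) $\Rightarrow$ (a) and the general shape of (b) $\Rightarrow$ (c) are fine, but the core of (a) $\Rightarrow$ (b) rests on a premise that fails: you assume that the characterisation in \cite{DDP2009} of the graphs whose subdivision graph is locally $(G,s)$-distance transitive for $s\leq 2d-1$ already produces a ``concrete candidate list'' which merely needs to be filtered by the extra transitivity at distance $2d$. It does not. That characterisation is in terms of arc-transitivity of $\Sigma$ itself (it is invoked in this paper only to conclude that $\Sigma$ is $(G,d)$-arc transitive), and for, say, $d=2$ the class of $(G,2)$-arc transitive graphs is infinite and far from classified. So there is no finite list to filter, and your second stage has nothing to act on.

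The missing idea --- and the actual engine of the paper's proof --- is the reduction to cages: if $\ssig$ is locally $(G,2d)$-distance transitive and $\Sigma$ has valency $k\geq 3$, then $\Sigma$ is a $(k,g)$-cage with $2d\leq g\leq 2d+1$ and $g\in\{4,5,6,8,12\}$ (Lemmas \ref{lem:d-arc} and \ref{basic}). This is proved either by applying Biggs's theorem to the pair $\Sigma$, $L(\Sigma)$ (both of which become $G$-distance transitive when $D\leq 2d+1$), or, when $D=2d+2$, by a direct count showing $|V\Sigma|$ meets the Moore bound. Only after this step does one obtain finitely many families: Moore graphs for $g=5$, generalised polygons for $g\in\{6,8,12\}$ via Singleton's and Weiss's theorems, plus $K_{n,n}$ for $g=4$; and even then the determination of the admissible groups (e.g.\ ruling out $\PGL(2,9)$ and keeping only $M_{10}$ for $\Inc(W(3,2))$, the duality condition in lines 4, 5 and 7, and the elimination of the $(G,3)$-transitive girth-$6$ case via the Ostrom--Wagner theorem and Li's $4$-arc transitivity result) requires specific arguments that your ``filtering'' does not anticipate. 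Without the cage reduction, the argument cannot be completed.
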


\begin{table}[h]\label{table1}
\caption{$(\Si,G)$ such that $\ssig$ is locally $(G,2d)$-distance transitive}
\begin{tabular}{|c||p{2.5cm}|p{6cm}||l|c|c|c|}
\hline
&$\Si$&$G$&$|V\Si|$&$g$&$d$&$D$\\

\hline\hline
1&$K_{n,n}$, $n\geq 3$& $G\leq S_n\wr S_2$ satisfies Condition (*)&$2n$&$4$&$2$&$4$\\%of Example 5.3 \cite{DDP2009}
\hline
2&$\Petersen$&$S_5$&$10$& $5$&$2$&$6$\\
\hline
3&$\HoSi$&$\PSU_3(5)$ or $\PSU_3(5).2$&$50$& $5$&$2$&$6$\\
\hline

4&$\Inc(\PG(2,q))$ &$\PSL(3,q)\leq G\leq \Aut (\PSL(3,q))$ and $G$ contains a duality&$ 2(\frac{q^3-1}{q-1})$&$6$&$3$&$6$\\

\hline

5&$\Inc(W(3,q))$, $q=2^m$&$\PSp(4,q)\leq G\leq \Aut
(\PSp(4,q)) $ and $G$ contains a duality&$ 2(\frac{q^4-1}{q-1})$ &$8$&$4$&$8$\\
\hline
6&$\Inc(W(3,2))$&$M_{10}$&$30$ &$8$&$4$&$8$\\
\hline

7&$\Inc(H(q))$,  $q=3^m$&$\G_2(q)\leq G \leq
\Aut(\G_2(q))$and $G$ contains a duality &$2(\frac{q^6-1}{q-1})$&$12$&$6$&$12$\\
\hline
8&$C_n$&$D_{2n}$&$n$&$n$&$\lfloor n/2\rfloor$&$n$\\
\hline

\end{tabular}
\end{table}

\underline{Comments on Table \ref{table1}}:
\begin{itemize}
 \item[(a)] The last 4 columns give more information on the graph $\Si$, namely its number of vertices, its girth $g$, its diameter $d$ and the diameter $D$ of $\ssig$. 
\item[(b)] We denote by $\Inc({\mathcal G})$, the incidence graph of the corresponding rank 2 geometry ${\mathcal G}$, which is a bipartite graph (points/lines). 
We have the following geometries (which are all generalised polygons): $\PG(2,q)$ is the Desarguesian projective plane over $\GF(q)$,  $W(3,q)$ is the symplectic generalised quadrangle  over $\GF(q)$,
 and  $H(q)$ is the classical hexagon over $\GF(q)$. 
Notice that $\Inc(W(3,2))$ is also known as the Tutte-Coxeter graph or Tutte eight-cage. 
\item[(c)] 
For the sake of completeness, we reproduce here Condition (*), which was determined in Example 5.3 of \cite{DDP2009}.
We say that $G\leq S_n\wr S_2$ satisfies Condition (*) if and only if 
       \begin{itemize}
            \item[(i)] $G\leqslant H\wr S_2$ with component $H$, where $H$ is 2-transitive on each orbit $\Delta_1$ and $\Delta_2$;
               \item[(ii)] for $u_1\in\Delta_1$, $G_{u_1}$ is transitive 
               on $(\Delta_1\setminus\{u_1\})\times \Delta_2$; and
                \item[(iii)] for $u_1\in\Delta_1$ and $u_2\in\Delta_2$, the stabiliser $G_{\{u_1,u_2\}}$ interchanges
          $u_1$ and $u_2$, and is transitive
              on $\{\{v_1,v_2\}\,|\, v_i\in \Delta_i\setminus\{u_i\}\,\}$. 
         \end{itemize}
\end{itemize}

\begin{rem}
 If $\Si$ has diameter $1$, then $\Si$ is a complete graph $K_n$, and this case was completely treated in \cite{DDP2009}. In particular, $\S(K_n)$ is  locally $(G,2)$-distance transitive 
if and only if $G$ is $3$-transitive on $V\Si$ (or $G=S_2$ if $n=2$). For $n=2$ or $3$, $\S(K_n)$ is  locally $(G,2)$-distance transitive if and only if it is  locally $G$-distance transitive. 
However, for $n\geq 4$, $\S(K_n)$ is locally $G$-distance transitive if and only if $G$ is $4$-transitive on $V\Si$, or $n=9$ and $G=\PGaL(2,8)$. 
Because the conditions on $G$ for $\ssig$ to be locally $(G,2)$-distance transitive or locally $G$-distance transitive are different, we excluded this case from  Theorem \ref{char}. 
\end{rem}

The main tool for proving Theorem \ref{char} is Lemma \ref{basic}, where we prove that a connected graph of valency $k\geq 3$, girth $g$ and diameter $d$ whose subdivision graph is locally $(G,2d)$-distance transitive is a $(k,g)$-cage (see Section 2 for definition).

%\begin{rem}
%For  $g=3$, (a) does not necessarily imply (c).  Suppose that $g=3$. Then $d=1$. Now if $\ssig$ is locally $(G,2)$-distance transitive, by Theorem 1.4 \cite{DDP2009}, we have  $\Si=K_n$, $n\geq 4$, and $G$ is a $3$-transitive group, while for locally $G$-distance transitivity of $\S(K_n)$, $n\geq 4$, it is  necessary that $G$ is $4$-transitive group.  
%\end{rem}

In \cite{DDP2009}, we proved that if  $\ssig$ is locally $(G,s)$-distance transitive  for $2d\leq s \leq D=2d+\delta$ and if $\Si$ has valency at least 3, then $s$ is at most $14+\delta$.  
The following Corollary \ref{diamsSub}, which follows immediately from Theorem\ref{char} and Table\ref{table1}, improves this result. %More precisely, it determines a bound for  diameter of the graphs $\Si$ and  their locally $G$-distance transitive subdivision graphs.
\begin{cor}\label{diamsSub}
 Let $\Sigma$ be a connected graph of valency at least $3$ and diameter $d$  at least $2$.  Let $G\leq\Aut(\Sigma)$, and suppose that  $\ssig$ is 
 locally $(G,s)$-distance transitive  for $2d\leq s \leq D=2d+\delta$. Then  $d\leq 6 $ and $s\leq D \leq 12$.
\end{cor}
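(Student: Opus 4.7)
The plan is to derive the corollary as a direct consequence of Theorem \ref{char} combined with a case-by-case inspection of Table \ref{table1}, after eliminating the low-valency row.

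First I would observe that the hypothesis $\ssig$ locally $(G,s)$-distance transitive for some $s\geq 2d$ implies in particular that $\ssig$ is locally $(G,2d)$-distance transitive, simply because local $(G,s)$-distance transitivity for a given $s$ implies the same property for every smaller value. Applying the equivalence (a)$\Leftrightarrow$(b) of Theorem \ref{char}, the pair $(\Sigma,G)$ must appear in one of the eight rows of Table \ref{table1}.

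Next I would use the valency hypothesis $k\geq 3$ to rule out row 8, whose entry is the cycle $C_n$ with valency $2$. This leaves rows 1--7, for which I would simply read off the diameters $d$ of $\Sigma$ and $D$ of $\ssig$ from the last two columns of the table. The maximum value of $d$ occurs in row 7 ($H(q)$), giving $d=6$, and the maximum value of $D$ also occurs there, giving $D=12$. Since by assumption $s\leq D$, the bounds $d\leq 6$ and $s\leq D\leq 12$ follow at once.

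There is no real obstacle: once Theorem \ref{char} is in hand, this is a one-line table lookup after discarding the cycle case. The only point that deserves explicit mention is the monotonicity of local $(G,s)$-distance transitivity in $s$, which justifies invoking the $s=2d$ case of Theorem \ref{char} even though the hypothesis is stated for a possibly larger $s$.
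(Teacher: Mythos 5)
Your proposal is correct and follows exactly the route the paper intends: the authors state that the corollary ``follows immediately from Theorem \ref{char} and Table \ref{table1}'', i.e.\ reduce to the $s=2d$ case by monotonicity, invoke (a)$\Rightarrow$(b), discard the cycle row using the valency hypothesis, and read $d\leq 6$ and $D\leq 12$ off the remaining rows. Nothing is missing.
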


%The following Theorem~\ref{thm:prob1.6} proved in Section~\ref{proof} also gives a solution to Problem 1.6 which is a consequence of above Theorem~\ref{char}:
%\begin{thm}\label{thm:prob1.6}
%Let $\Sigma$ be a connected graph with $\diam(\Sigma)=3$ and $\girth(\Sigma)=6$
%corresponding to locally distance transitive subdivision graph $\ssig$. Then
%$\Sigma$ is the incidence graph of $\PG(2,q)$ and $L_{3}(q)\leq G\leq Aut(L_{3}(q))$. Moreover,
%$V\Sigma=2(q^{2}+q+1)$ and valency of $\Sigma$ is $q+1$.
%\end{thm}

\section{Preliminary results}

Before we prove our main theorem, we are going to state a few more definitions, and prove some necessary Lemmas.

The \emph{line graph} $L(\Gamma)$ of  a graph $\Gamma$ is defined as the graph with
vertex set $E\Ga$ and edges $\{e_1, e_2\}$, for $e_1, e_2\in E\Gamma$ such that
$e_1\cap e_2\neq \emptyset$. The \emph{distance $2$ graph} of $\Gamma$ is the graph
$\Gamma^{[2]}$ with the same vertex set as $\Gamma$ but with the edge set replaced
by the set of all vertex pairs $\{u,v\}$ such that $d_\Gamma(u,v)=2$.  If $\Gamma$
is connected and bipartite, then all vertices at even $\Gamma$-distance from $v$ lie in the same
connected component of $\Gamma^{[2]}$, and all vertices at odd $\Gamma$-distance
from $v$ lie in another connected component, and hence $\Gamma^{[2]}$  has two connected components. 

Note that for a connected graph $\Sigma$, by definition, the subdivision graph
$\ssig$ is bipartite, and so the distance $2$ graph of $\ssig$ has two connected components $\Sigma$ and
$L(\Sigma)$. The following Lemma gives more details about the connected
components of the subdivision graph $\ssig$ when $\ssig$ is locally $(G,2d)$-distance
transitive.

\begin{lem} \label{lem:line}
Let $\Sigma$ be a connected graph of diameter $d$, whose subdivision graph $\ssig$ has diameter $D$ satisfying $D\leq 2d+1$. If $\ssig$ is locally $(G,2d)$-distance transitive 
for $G\leq \Aut(\Sigma)$, then $\Sigma$ and $L(\Sigma)$ are $G$-distance transitive. 
\end{lem}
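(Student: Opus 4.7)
The plan is to exploit the bipartite structure of $\ssig$ together with the fact that distances within each bipartition part are exactly twice the corresponding distances in $\Si$ or $L(\Si)$. First, I would show that $G$ acts transitively on each of the two parts $V\Si$ and $E\Si$. Since $G\leq\Aut(\Si)$, it preserves the bipartition of $\ssig$, so it has at least two $G$-orbits on vertices. Lemma~2.1 of~\cite{DDP2009}, recalled in the introduction, states that any locally $(G,1)$-distance transitive graph either is $G$-vertex-transitive or has exactly two $G$-orbits which are the parts of a bipartition. Applied to $\ssig$ (which is locally $(G,1)$-distance transitive since it is locally $(G,2d)$-distance transitive), this forces the two $G$-orbits on $V\ssig$ to be precisely $V\Si$ and $E\Si$.

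Next, for $u,v\in V\Si$ one has $d_\ssig(u,v)=2\,d_\Si(u,v)$: any $\Si$-path lifts to an $\ssig$-path of twice the length by inserting the edge-midpoints, and any $\ssig$-walk between vertices of $V\Si$ projects to a $\Si$-walk of half the length by deleting the midpoints. Therefore $\ssig_{2i}(x)=\Si_i(x)$ for $x\in V\Si$ and $0\leq i\leq d$, and the local $(G,2d)$-distance transitivity of $\ssig$ yields transitivity of $G_x$ on $\Si_i(x)$ for every $1\leq i\leq d$. Combined with the first step, this shows that $\Si$ is $G$-distance transitive.

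The argument for $L(\Si)$ is parallel. The same lifting/projecting argument gives $d_\ssig(e,e')=2\,d_{L(\Si)}(e,e')$ for $e,e'\in E\Si$; if $d_L$ denotes the diameter of $L(\Si)$, then $2d_L\leq D\leq 2d+1$, so $d_L\leq d$ (since $d_L$ is an integer). Consequently, for every $e\in E\Si$ and every $1\leq j\leq d_L$, $L(\Si)_j(e)=\ssig_{2j}(e)$ with $2j\leq 2d$, so $G_e$ is transitive on $L(\Si)_j(e)$. Together with the transitivity of $G$ on $E\Si$ from the first step, this yields that $L(\Si)$ is $G$-distance transitive. The only place where the hypothesis $D\leq 2d+1$ is essential is in the derivation $d_L\leq d$, which ensures that the range of nonzero distances in $L(\Si)$ is fully covered by the available transitivity on $\ssig$; the rest is routine bookkeeping on bipartition parts and the distance-doubling identity, and I do not foresee any serious obstacle.
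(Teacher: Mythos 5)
Your proposal is correct and follows essentially the same route as the paper's own proof: both use Lemma 2.1 of \cite{DDP2009} to get the two orbits $V\Si$ and $E\Si$, then the distance-doubling identity within each bipart together with the parity/diameter bound $D\leq 2d+1$ to deduce that all distances in $\Si$ and $L(\Si)$ are at most $d$, so the local $(G,2d)$-distance transitivity covers every distance class. No issues to report.
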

\begin{proof}
Since $\S(\Sigma)$ is locally $(G,2d)$-distance transitive, in particular it is locally
$(G,1)$-distance transitive. Note also that $G$ is intransitive on the vertices of $\ssig$. Therefore, by \cite[Lemma 2.1]{DDP2009}, $G$ has two orbits on the vertices of $\ssig$, 
namely one corresponding to the vertices of $\Sigma$ and one to the edges of $\Sigma$.
In other words, $L(\Sigma)$ and $\Sigma$ are $G$-vertex transitive. 
Note that the distance in $\ssig$ between two vertices in the same bipart (that is, either two vertices of $\Si$ or two edges of $\Si$) is even, and so is at most $2d$.
Thus $d_{\Sigma}(u,v)=\frac{1}{2}d_{\S(\Sigma)}(u,v)\leq d$ and
$d_{L(\Sigma)}(e,f)=\frac{1}{2}d_{\S(\Sigma)}(e,f)\leq d$, where $u,v\in V\Sigma$ and
$e,f\in E\Sigma$. Since
$\S(\Sigma)$ is locally $(G,2d)$-distance transitive,
 we have that the two graphs $\Sigma$ and $L(\Sigma)$ are locally $G$-distance transitive. Since they are also vertex-transitive, they are $G$-distance transitive.
\end{proof}

For a positive integer $s$, an \emph{$s$-arc} of a graph is an
$(s+1)$-tuple $(v_0,v_1,\ldots, v_s)$ of vertices such that $v_i$ is
adjacent to $v_{i-1}$ for all $1\leqslant i\leqslant s$ and $v_{j-1}\neq
v_{j+1}$ for all $1\leqslant j\leqslant s-1$. The number $s$ is called
the length of the $s$-arc. % A $1$-arc is simply called an arc.
An $s$-arc such that $d(v_0,v_s)=s$ is called an \emph{$s$-geodesic}.
We say that $\Si$ is $(G,s)$-arc transitive for some $s$ if $G$ acts transitively on
the set of $s$-arcs.
Moreover  we say that $\Si$ is $(G,s)$-transitive if $G$ acts
transitively on the set of $s$-arcs but intransitively on the set of $(s+1)$-arcs. So  a finite $(G,s)$-arc transitive graph is  $(G,t)$-transitive for some $t\geq s$.

\begin{lem}\label{lem:d-arc}
Let $\Sigma$ be a connected graph of diameter $d$ and girth $g$, whose subdivision graph $\ssig$ is locally $(G,2d)$-distance transitive. Then $\Si$ is $(G,d)$-arc transitive and 
so is $(G,s)$-transitive for some $s\geq d$.
Moreover $2d\leq g \leq 2d+1$.
\end{lem}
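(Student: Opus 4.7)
I would split the proof into two parts: the girth bounds $2d\le g\le 2d+1$, and the $(G,d)$-arc transitivity. The concluding assertion that $\Si$ is $(G,s)$-transitive for some $s\ge d$ then follows immediately from $(G,d)$-arc transitivity together with the remark, already in the text, that a finite $(G,s)$-arc transitive graph is $(G,t)$-transitive for some $t\ge s$.

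By Lemma~\ref{lem:line}, $\Si$ is $G$-distance transitive, in particular $G$-vertex-transitive, so (as $d\ge 2$) it contains cycles. The upper bound $g\le 2d+1$ is the classical observation: on a shortest cycle of length $g$, two vertices at cycle-distance $\lfloor g/2\rfloor$ cannot be closer in $\Si$ (otherwise one obtains a strictly shorter cycle), hence $\lfloor g/2\rfloor\le d$.

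For the lower bound $g\ge 2d$, the strategy is to exploit the constancy, forced by local $(G,2d)$-distance transitivity of $\ssig$, of the intersection numbers $c_i(y):=|\ssig_{i-1}(z)\cap\ssig_1(y)|$ over $y\in\ssig_i(z)$. Taking $z=x\in V\Si$ and odd $i=2j+1\le 2d-1$, an edge $y\in\ssig_i(x)$ is either \emph{horizontal} with both endpoints in $\Si_j(x)$ (giving $c_i(y)=2$) or \emph{crossing} with one endpoint in $\Si_j(x)$ and the other in $\Si_{j+1}(x)$ (giving $c_i(y)=1$). Crossings exist since $\Si_{j+1}(x)\ne\emptyset$, so constancy forces the horizontal type to be empty, ruling out odd cycles of length $\le 2d-1$ in $\Si$. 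To exclude an even cycle of length $g=2m<2d$, I would switch basepoint to an edge $y=\{v_0,v_1\}$ lying on such a cycle of $\Si$. The edge opposite $y$ on this cycle lies in $\ssig_{2m}(y)$ with both its endpoints at $\ssig$-distance $2m-1$ from $y$, contributing $c_{2m}=2$. On the other hand, any vertex of $\Si$ at distance $d\ge m+1$ from $v_0$ sits at $\ssig$-distance at least $2d-1\ge 2m+1$ from $y$, so by connectedness $\ssig_{2m+1}(y)\ne\emptyset$, and a BFS-predecessor of any of its elements yields an edge in $\ssig_{2m}(y)$ with $c_{2m}=1$. Since $2m<2d$ this violates constancy. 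The main technical obstacle of the lemma is precisely this two-sided argument, which uses $V\Si$-basepoints of $\ssig$ for the odd case and $E\Si$-basepoints for the even case.

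With $g\ge 2d$ in hand, two distinct $k$-geodesics in $\Si$ sharing endpoints would together contain a cycle of length at most $2k$, so $k$-geodesics are unique for $k\le d-1$, and every $k$-arc with $k\le d$ is automatically a $k$-geodesic. I would then prove $G_x$-transitivity on $k$-arcs from $x$ by induction on $k\le d$. After aligning the $k$-prefixes of two $(k+1)$-arcs using the induction hypothesis, the step reduces to producing $h\in G_x$ sending $\{v_k,v_{k+1}\}$ to $\{v_k,v_{k+1}'\}$. Such $h$ is supplied by $G_x$-transitivity on the crossing edges $\ssig_{2k+1}(x)$ via orbit-stabiliser applied to the $G_x$-equivariant map sending each crossing edge to its endpoint in $\Si_k(x)$, yielding $G_{x,v_k}$-transitivity on the edges at $v_k$ going to $\Si_{k+1}(x)$. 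Uniqueness of the $k$-geodesic $(x,v_1,\ldots,v_k)$ for $k\le d-1$ upgrades $G_{x,v_k}$ to the stabiliser of the entire $k$-prefix, so the required $h$ exists. Combined with $G$-vertex-transitivity, this gives $(G,d)$-arc transitivity of $\Si$, completing the proof.
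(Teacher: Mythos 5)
Your proposal is correct, but it takes a genuinely different and considerably longer route than the paper. The paper disposes of the lemma in three lines: local $(G,2d)$-distance transitivity implies local $(G,2d-1)$-distance transitivity, which by \cite[Theorem 1.3]{DDP2009} already yields that $\Si$ is $(G,d)$-arc transitive; since some $d$-arc is a $d$-geodesic, transitivity on $d$-arcs forces \emph{every} $d$-arc to be a $d$-geodesic, whence $g\ge 2d$; and $g\le 2d+1$ follows, as in your argument, from a girth cycle being isometric up to half its length. You reverse the logical order: you first establish $g\ge 2d$ directly from the constancy of the intersection numbers $c_i$ on $\ssig$ (odd girth handled with vertex basepoints, even girth with edge basepoints — both computations are sound, noting that they must be applied to a girth cycle so that cycle-distances are realised in $\Si$), and you then derive $(G,d)$-arc transitivity by induction, using uniqueness of $k$-geodesics for $k\le d-1$ to upgrade $G_{x,v_k}$ to the stabiliser of the whole prefix. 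In effect you reprove, in the case at hand, the direction of \cite[Theorem 1.3]{DDP2009} that the paper simply cites; the gain is a self-contained argument, the cost is length. One small caution: you invoke Lemma~\ref{lem:line} to conclude that $\Si$ is $G$-distance transitive, but that lemma carries the hypothesis $D\le 2d+1$, which is not available at this stage (indeed $D=2d+2$ can occur and is treated separately in Lemma~\ref{basic}). Fortunately you only use $G$-vertex-transitivity of $\Si$, which follows from local $(G,1)$-distance transitivity together with \cite[Lemma 2.1]{DDP2009} without any assumption on $D$; cite that instead.
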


\begin{proof}
Let $\ssig$ be locally $(G,2d)$-distance transitive. Then $\ssig$ is locally $(G,2d-1)$-distance transitive. So by \cite[Theorem 1.3]{DDP2009}, $\Si$ is $(G,d)$-arc transitive. 
Since $\diam(\Si)=d$, there exists $d$-arcs which are $d$-geodesics and so all $d$-arcs of $\Si$ are $d$-geodesics. It follows that $g\geq 2d$. 
On the other hand, if $g\geq 2d+2$, then $\Sigma$ would have vertices at distance greater than $d$, a contradiction. Therefore  $2d\leq g \leq 2d+1$.
%We know that $\Si$ is $(G,s)$-transitive for some $s\geq d$ and that $d\geq \frac{g-1}{2}$. The result follows.
\end{proof}

In \cite{Weiss85}, Weiss classified all the $(G,s)$-transitive graphs for $s\geq 4$. From his result, we can deduce the following theorem.
\begin{thm}\label{Weiss}
Let $s\geq 4$ and let $\Si$ be a connected graph of valency $k\geq 3$, and girth $g\leq 2s$.  Suppose that $\Si$ is $(G,s)$-transitive. Then $s\leq \frac{g+2}{2}$ and 
$\Si,s,g,k,G$ are as in one of the lines of Table~\ref{tab:Weiss}. %where  $\Delta_{s-1,q}$ is the incidence graph of the corresponding generalized $(s-1)$-gon.
\end{thm}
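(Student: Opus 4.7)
The plan is to deduce this theorem directly from Weiss's 1985 classification \cite{Weiss85} of connected $(G,s)$-transitive graphs of valency at least $3$ with $s\geq 4$. That result already provides a finite explicit list of such pairs $(\Si,G)$ together with the associated parameters $s$ and $k$, and in particular establishes the universal bound $s\leq 7$. The present theorem is essentially the restriction of that list under the extra hypothesis $g\leq 2s$, together with a uniform bound relating $s$ and $g$.

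The first step is, for each entry of Weiss's list, to record the girth $g$ and the precise value of $s$ (the largest integer for which the graph is $(G,s)$-arc transitive but not $(s+1)$-arc transitive). Typical entries include the incidence graphs of generalised $n$-gons (of girth $2n$, with $n\in\{3,4,6\}$ by the Feit--Higman theorem), the Petersen and Hoffman--Singleton graphs (of girth $5$), together with a small number of related examples; in each case the girth and the relevant $s$-value are well documented in the literature.

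The second step is to discard the entries for which $g>2s$ and compile the survivors into Table~\ref{tab:Weiss}. The final step is to verify, line by line, the inequality $s\leq\frac{g+2}{2}$, i.e.\ $g\geq 2s-2$. Combined with the hypothesis $g\leq 2s$, this forces $g\in\{2s-2,\,2s-1,\,2s\}$, so the inequality is checked by direct inspection on the finitely many retained cases.

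The main obstacle is organisational rather than conceptual: carefully extracting the parameters $(\Si,s,g,k,G)$ from Weiss's tables, making sure that the $s$ recorded is indeed the maximal one (so that the graph is $(G,s)$-transitive, not merely $(G,s)$-arc transitive), and confirming the girth for each sporadic or family entry. Once this bookkeeping is done correctly, both the classification and the inequality $s\leq(g+2)/2$ follow at once.
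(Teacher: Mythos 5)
Your overall strategy---invoke Weiss's classification and reduce the theorem to bookkeeping---is the same as the paper's, which itself offers nothing beyond the remark that the theorem ``can be deduced'' from \cite{Weiss85}. However, your description of what \cite{Weiss85} supplies contains an error that would derail the plan if executed literally. There is \emph{no} finite explicit list of all connected $(G,s)$-transitive graphs of valency $k\geq 3$ with $s\geq 4$: the bound $s\leq 7$ (which in any case comes from Weiss's earlier nonexistence-of-$8$-arc-transitive-graphs paper, not from \cite{Weiss85}) bounds only $s$, and for each admissible value of $s$ there are infinitely many pairwise non-isomorphic examples (for instance the sextet graphs give infinitely many $5$-arc-transitive cubic graphs, and suitable covers preserve $s$-arc-transitivity while increasing the girth --- note that Table~\ref{tab:Weiss} itself contains a cover). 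So you cannot ``discard the entries with $g>2s$'' from a pre-existing finite list: the hypothesis $g\leq 2s$ is an essential \emph{input} to the theorem of \cite{Weiss85} being used, whose whole point is that girth small relative to $s$ forces the graph into the short list of Table~\ref{tab:Weiss}. You need to state and apply that theorem with the girth hypothesis in place, not filter afterwards.

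Once this is corrected the rest of your outline is sound. One further simplification: the inequality $s\leq \frac{g+2}{2}$, i.e.\ $g\geq 2s-2$, is the classical girth bound valid for \emph{every} $s$-arc-transitive graph of valency at least $3$, with no upper bound on the girth required; so it needs no line-by-line verification, although checking it against the table is of course harmless.
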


\begin{table}[h]\label{tab:Weiss}
\centering
\begin{tabular}{|l|l|l|l|l|}
\hline
$\Si$&$s$&$g$&$k$&$G$\\
\hline
$\Inc(\PG(2,q))$&$4$&$6$&$q+1$&$\PSL(3,q)\leq G\leq \Aut(\PSL(3,q))$\\
\hline
$\Inc(W(3,q))$, $q=2^m$&$5$&$8$& $q+1$&$\PSp(4,q)\leq G\leq \Aut
(\PSp(4,q)) $\\
\hline
$\Inc(W(3,2))$&$4$&$8$& $3$&$ G'=A_6$\\
\hline
$\Inc(H(q))$,  $q=3^m$& $7$&$12$&$q+1$& $\G_2(q)\leq G \leq \Aut(\G_2(q))$\\
\hline
$3$-fold cover of $\Inc(W(3,2))$& $5$&$10$& $3$& $3$-fold cover of $S_6$\\
\hline
\end{tabular}
\end{table}

The following bound is easy to calculate, See Theorem 1.1 in Chapter 6 of \cite{HandSh1993} for instance.

\begin{lem}\label{lemcage}
 Let $\Si$ be a regular graph of girth $g$ and valency $k$. Then $|V\Sigma|\geq n_0(k,g)$ where
\[
n_0(k,g)=
\left\{
  \begin{array}{ll}
    1+k+k(k-1)+\cdots +k(k-1)^{\frac{g-3}{2}} & \hbox{ if } g \hbox{ is odd } ;\\
   2(1+(k-1)+\cdots (k-1)^{\frac{g-2}{2}}) & \hbox{ if } g \hbox{ is even}.
  \end{array}
\right.
\]
\end{lem}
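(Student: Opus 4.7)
My plan is to invoke the classical Moore-type counting: fix a single vertex as the root when $g$ is odd and a single edge when $g$ is even, then count the vertices of $\Si$ by distance from the root up to the largest level for which the girth hypothesis forbids any collision between parallel branches of the resulting BFS.

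For odd $g = 2r+1$, I would fix any $v \in V\Si$ and set $N_i = \{w \in V\Si : d_\Si(v,w) = i\}$, so that $|V\Si| \geq \sum_{i=0}^{r} |N_i|$. The claim is that $|N_0| = 1$, $|N_1| = k$, and $|N_i| = k(k-1)^{i-1}$ for $2 \leq i \leq r$, proved by induction on $i$. The crux is that for each $i \leq r$, any vertex of $N_i$ has a unique neighbour in $N_{i-1}$ (two such neighbours would create a cycle of length at most $2i \leq 2r < g$), and there are no edges within $N_{i-1}$ (such an edge would create a cycle of length at most $2(i-1)+1 \leq 2r-1 < g$). Hence each vertex of $N_{i-1}$ contributes exactly $k-1$ neighbours in $N_i$ (or $k$ when $i=1$), with no overlap between contributions. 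Summing the geometric series and using $r = (g-1)/2$ yields the bound with final exponent $(g-3)/2$.

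For even $g = 2r$, I would replace the root vertex with an edge $\{u,v\}$ and set
\[
A_i = \{w : d_\Si(u,w) = i,\ d_\Si(v,w) = i+1\}, \qquad B_i = \{w : d_\Si(u,w) = i+1,\ d_\Si(v,w) = i\}.
\]
For $0 \leq i \leq r-1$, these sets are pairwise disjoint and exhaust all vertices within distance $r-1$ of the edge: a vertex $w$ with $d_\Si(u,w) = d_\Si(v,w) = i$ would yield, by concatenating two shortest paths with the edge $\{u,v\}$, an odd closed walk of length $2i+1 \leq 2r-1 < g$, producing an odd cycle shorter than the girth, which is impossible. The same unique-parent induction as in the odd case then gives $|A_i| = |B_i| = (k-1)^i$, and summing produces $2\bigl(1 + (k-1) + \cdots + (k-1)^{r-1}\bigr)$ with $(g-2)/2 = r-1$, as required.

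The argument is entirely textbook and presents no real obstacle. The only point requiring any care is that both the uniqueness-of-parent and the no-edges-within-a-level conditions must hold throughout the summation range; each is an immediate consequence of the girth bound, and it is exactly the threshold where these conditions would first fail that sets the upper limit of the geometric sum.
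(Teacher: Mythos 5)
Your proof is correct: it is the standard Moore-bound breadth-first counting argument (root at a vertex for odd girth, at an edge for even girth, with unique parents and no intra-level or cross-branch edges forced by the girth hypothesis), and all the cycle-length estimates you give stay strictly below $g$ throughout the summation range. The paper itself offers no proof of this lemma, deferring to Theorem 1.1 in Chapter 6 of Holton and Sheehan, which is essentially the argument you have written out, so there is nothing to compare beyond noting that your write-up supplies the details the paper omits.
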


\begin{defn}
Let $\Si$ be a regular graph of girth $g$ and valency $k$. Then $\Sigma$ is called a $(k,g)$-cage
\footnote{Such a graph can be found under different names in the litterature; for example, it is called a $(k,g)$-graph  by Biggs in \cite{Biggs2} and it is called $(k,g)$-Moore graph by Holton and Sheehan in 
\cite{HandSh1993}.} 
if $|V\Sigma|=n_0(k,g)$.
\end{defn}

The following Lemma~\ref{basic} is a reduction result which plays an important
role to prove our main theorem.

\begin{lem}\label{basic}
 Let $\Si$ be a connected graph of diameter $d$, valency $k\geq 3$, girth $g$ and $G\leq \Aut(\Si)$. 
%Let also the subdivision graph of $\Si$, $\Gamma=\ssig$ be of diameter $D=2d+\delta$, where $\delta=0, 1, 2$. 
Suppose that $\Ga=\ssig$ is locally $(G,2d)$-distance transitive. Then $\Si$ is a $(k,g)$-cage.  Moreover $g\in \{3,4,5,6, 8,12\}$.
\end{lem}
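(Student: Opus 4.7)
My plan is to begin by invoking Lemma~\ref{lem:d-arc}, which gives that $\Si$ is $(G,d)$-arc transitive, is $(G,s)$-transitive for some $s\geq d$, and has $2d\leq g\leq 2d+1$. Next I exploit the identity $\ssig_{2i}(v)=\Si_i(v)$ for every $v\in V\Si$ and $0\leq i\leq d$ (the $\ssig$-distance between two vertices of $V\Si$ is always even, so the even spheres around $v$ miss $E\Si$ entirely): the hypothesis that $\ssig$ is locally $(G,2d)$-distance transitive then forces $G_v$ to act transitively on each $\Si_i(v)$, and combined with $G$-vertex transitivity this makes $\Si$ $G$-distance transitive, hence distance-regular with well-defined intersection parameters $c_i,a_i,b_i$.

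The girth $g=2d+1$ case is immediate from a tree count: since $g>2d$, the ball of radius $d$ around any vertex is a tree, so $|\Si_i(v)|=k(k-1)^{i-1}$ for $i\leq d$ and summation gives $|V\Si|=n_0(k,2d+1)$, making $\Si$ a $(k,g)$-cage (a Moore graph). For $g=2d$ the tree structure reaches only radius $d-1$, and the girth rules out edges inside $\Si_{d-1}(v)$, so edge-counting between $\Si_{d-1}(v)$ and $\Si_d(v)$ gives $|\Si_d(v)|=k(k-1)^{d-1}/c_d$; a short calculation then shows $|V\Si|=n_0(k,2d)$ is equivalent to $c_d=k$ (equivalently $a_d=0$, i.e., $\Si_d(v)$ has no internal edges and $\Si$ is bipartite). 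To prove $c_d=k$ when $s\geq d+1$, I observe that any $(d+1)$-arc $(v_0,\ldots,v_{d+1})$ has $d_\Si(v_0,v_{d+1})\in\{d-1,d\}$ (from diameter together with a closed-walk/girth argument: $d_\Si(v_0,v_{d+1})\leq d-2$ would yield a cycle of length $\leq 2d-1<g$), that $(G,d+1)$-arc transitivity forces this endpoint distance to be the same for all $(d+1)$-arcs, and that the value $d$ would give $c_d=1$, contradicting $c_d\geq 2$ (which follows from the existence of a $2d$-cycle); hence the common value is $d-1$, which is exactly $c_d=k$.

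When $s=d$ exactly I split by diameter. For $d\geq 4$, Theorem~\ref{Weiss} applies: the table entries compatible with $g=2d$ are the incidence graphs of the four listed generalised $d$-gon families, all cages; the $3$-fold cover of $\Inc(W(3,2))$ in Weiss's list has girth $10$ and diameter greater than $5$, violating $g\geq 2d$ from Lemma~\ref{lem:d-arc}, and is therefore excluded. For $d\leq 3$ (giving residual cases $d=1$, $(d,g)=(2,4)$, and $(d,g)=(3,6)$) I use the finer $\ssig$-symmetry: the hypothesis also gives $G_e$-transitivity on $\ssig_{2i}(e)=L(\Si)_i(e)$ for $i\leq d$, a condition strictly stronger than $(G,d)$-arc transitivity of $\Si$ and strong enough to eliminate non-cage distance-regular candidates (for instance, in the $(d,g)=(2,4)$ case the Higman--Sims graph is $(G,2)$-arc transitive but produces multiple $G_e$-orbits on $L(\Si)_2(e)$). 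The remaining possibilities are $K_{k+1}$ at $d=1$, $K_{k,k}$ at $(d,g)=(2,4)$, and the incidence graph of a projective plane at $(d,g)=(3,6)$, all cages.

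The restriction $g\in\{3,4,5,6,8,12\}$ then follows from classical non-existence theorems: Damerell and Bannai--Ito show that Moore graphs of valency $k\geq 3$ exist only for diameter $d\leq 2$, leaving $g\in\{3,5\}$ in the $g=2d+1$ case; and Feit--Higman restricts generalised $d$-gons of order $(s,s)$ with $s\geq 1$ to $d\in\{2,3,4,6\}$, giving $g\in\{4,6,8,12\}$ in the $g=2d$ case. The main obstacle is the case $g=2d$ with $s=d$ in small diameter: the $(G,d)$-arc transitivity of $\Si$ alone is not enough to force $c_d=k$, and one must genuinely exploit the local $\ssig$-symmetry at edge vertices (the transitivity of $G_e$ on $L(\Si)$-spheres) to rule out distance-regular non-cage examples.
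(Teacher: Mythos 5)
Your route is genuinely different from the paper's: the paper reduces everything to Lemma~\ref{lem:line} plus Biggs's theorem from \cite{Biggs2} (a $G$-distance transitive graph whose line graph is also $G$-distance transitive is a $(k,g)$-cage) when $D\leq 2d+1$, and handles $D=2d+2$ by a direct count from an edge after forcing $g=2d+1$. Your vertex-side observations are sound: $\ssig_{2i}(v)=\Si_i(v)$ does give $G$-distance transitivity of $\Si$, the Moore count for $g=2d+1$ is correct, the reduction of the $g=2d$ case to $c_d=k$ is correct, and your argument that $(G,d+1)$-arc transitivity forces $c_d=k$ (via $c_d\geq 2$ from a $2d$-cycle) works. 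The Weiss argument for $s=d\geq 4$ also goes through (only the line with $\Inc(W(3,2))$ has $s=g/2$, and the triple cover is excluded since its diameter exceeds $g/2$).

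The genuine gap is the case $g=2d$, $s=d$, $d\in\{2,3\}$, which is exactly where all the real non-cage candidates live and where you offer only an assertion. For $d=2$ the $(G,2)$-transitive, triangle-free, distance-transitive graphs with $c_2<k$ include the Clebsch, Gewirtz, $M_{22}$ and Higman--Sims graphs; for $d=3$ the odd graph $O_4$ (girth $6$, diameter $3$, $3$-arc transitive, $c_3=2<k$, $35\neq n_0(4,6)=26$ vertices) is a candidate. Note that all of these have $1<c_d<k$ and hence are only $(G,d)$-transitive, so they all escape your $s\geq d+1$ argument and land precisely in the subcase you wave at. Saying that $G_e$-transitivity on $L(\Si)_i(e)$ ``is strong enough to eliminate non-cage distance-regular candidates'' is the entire content of the lemma in these cases, and you give no mechanism for it; the one example you cite (Higman--Sims) is stated without verification. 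The paper's mechanism is precisely Biggs's theorem: $L(\Si)$ being $G$-distance transitive already forces the cage condition, uniformly in $d$. If you want to keep your framework, you must either prove a version of that statement or check each sporadic candidate's edge-stabiliser orbits explicitly; as written the proof does not close. (A smaller point: when $D=2d+2$ the hypothesis only controls $G_e$ on $L(\Si)_i(e)$ for $i\leq d$ while $L(\Si)$ may have diameter $d+1$, so even invoking Biggs would require the paper's separate treatment of that case -- your girth-based split avoids this, but only because the problematic subcase is the one you left unproved.)
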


\begin{proof}
Let $D=\diam(\ssig)$. We have seen in \cite{DDP2009} that $2d\leq D\leq 2d+2$. 
Suppose first that $D=2d$ or $2d+1$. By Lemma \ref{lem:line}, $\Sigma$ and $L(\Sigma)$ are $G$-distance transitive, and so by \cite{Biggs2},  $\Sigma$ is a
$(k,g)$-cage. 

Now suppose that $D=2d+2$. Then $\Ga$ contains two vertices at distance $D$, and since $D$ is even, they are in the same bipart of $\Gamma$. 
Since $d_\Ga(u,v)=2 d_\Si(u,v)\leq 2d$ for $u,v\in V\Si$, we must have two edges    $e=\{x,y\}, \ f=\{u,v\}\in E\Si$ such that $d_\Ga(e,f)=2d+2$.
We can see easily that therefore $d_{\Si}(x,u)=d_{\Si}(x,v)=d_{\Si}(y,u)=d_{\Si}(y,v)=d$. 

By Lemma \ref{lem:d-arc}, we know that $2d\leq g\leq 2d+1$.  Suppose that $g=2d$. Then  there exists $e'=\{x',y'\}\in \Gamma_{2d}(e)$ such that $d_{\Gamma}(e,x')=d_{\Gamma}(e,y')=2d-1$. 
There also exists $f'=\{w,u\}\in \Gamma_{2d}(e)$ such that $d_{\Gamma}(e,u)=2d+1$ and $d_{\Gamma}(e,w)=2d-1$. Note that $G_{e}$ cannot map $f'$ to $e'$, and so we get a contradiction to the fact that
$\Ga$ is locally $(G,2d)$-distance transitive. 
Thus $g=2d+1$. Moreover all edges in $\Gamma_{2d}(e)$ have one vertex in $\Gamma_{2d-1}(e)$ and one in $\Gamma_{2d+1}(e)$.
% Since $G_{e}$ is transitive on $\Gamma_{2d}(e)$ and all vertices in $\Gamma_{2d+1}(e)$ are on an edge in $\Gamma_{2d}(e)$ we have that $G_{e}$ is transitive on $\Gamma_{2d+1}(e)$.

All the vertices of $\Si$ are at an odd distance (in $\Ga$) from $e$, and so $|V\Si|=|\Gamma_{1}(e)|+|\Gamma_{3}(e)|+\ldots |\Gamma_{2d+1}(e)|$. 
It follows  easily from $g=2d+1$  that $|\Gamma_{2i-1}(e)|=2(k-1)^{i-1}$ for $1\leq i\leq d$. We now want to determine $|\Gamma_{2d+1}(e)|$.

Recall that  $u\in \Gamma_{2d+1}(e)$. Since $g=2d+1$ and we have $d_{\Si}(u,x)=d_{\Si}(u,y)=d$, there exists a $d$-geodesic of $\Si$ from $u$ to $x$ and a $d$-geodesic of $\Si$ from $u$ to $y$. 
 If these two $d$-geodesics shared the same first edge, then $\Si$ would contain a cycle of length less than $2d-1$, a contradiction. 
So $u$ is contained in at least  two edges in $\Gamma_{2d}(e)$.
If $u$ was on at least three edges in $\Gamma_{2d}(e)$, then there would be at least two $d$-geodesics  of $\Si$  from $u$ to say $x$, by the pidgeon-hole principle, 
and so there would be a cycle of length at most $2d$ in $\Si$, a contradiction. 
So $u$ is on exactly two edges in $\Gamma_{2d}(e)$ and  on $k-2$ edges in  $\Gamma_{2d+1}(e)$. Notice also that since $g=2d+1$, there is no edges in between vertices of $\Gamma_{2d-1}(e)$. 
To determine  $|\Gamma_{2d+1}(e)|$, we now count the number of edges in $\Gamma_{2d}(e)$ in two ways. 
We recall that all such edges have one vertex in $\Gamma_{2d-1}(e)$ and one in  $\Gamma_{2d+1}(e)$. Hence $$|\Gamma_{2d-1}(e)|\cdot (k-1)=|\Gamma_{2d+1}(e)|\cdot 2.$$ 
So $|\Gamma_{2d+1}(e)|=(k-1)^d$.  

Therefore $|V\Si|=2+2(k-1)+\cdots +2(k-1)^{d-1}+(k-1)^d=1+k+k(k-1)+\cdots +k(k-1)^{d-1}=1+\sum_{i=0}^{d-1} k(k-1)^{i}$. This equality is easily shown by induction. 
Since $g=2d+1$, we have $|V\Si|=1+\sum_{i=0}^{\frac{g-3}{2}}k(k-1)^{i}=n_0(k,g)$.
Therefore $\Si$ is a $(k,g)$-cage. Moreover by \cite[pp. 189--190]{HandSh1993},  $g\in \{3,4,5,6, 8, 12\}$.
\end{proof}

When $\Sigma$ is bipartite, the diameter of $\ssig$ is determined. 
\begin{lem}\label{doublediam}
Let $\Sigma$ be a bipartite graph and $\Ga$ be its subdivision graph, denoted by $S(\Sigma)$. Then $\diam(\Ga)=2\,\diam(\Sigma)$.
\end{lem}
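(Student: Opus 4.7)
My plan is to show the two inequalities $\diam(\Ga)\leq 2d$ and $\diam(\Ga)\geq 2d$ where $d=\diam(\Si)$. The lower bound is immediate: pick $u,v\in V\Si$ with $d_\Si(u,v)=d$; the unique geodesic in $\Ga$ between $u$ and $v$ traverses each $\Si$-edge via the midpoint vertex, so $d_\Ga(u,v)=2d$, giving $\diam(\Ga)\geq 2d$. For the upper bound, I would examine each of the three types of vertex pairs in $\Ga$, using the biparition $V\Ga=V\Si\cup E\Si$, and show that no distance in $\Ga$ exceeds $2d$.

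The first case, two vertices $u,v\in V\Si$, is trivial: $d_\Ga(u,v)=2d_\Si(u,v)\leq 2d$. The remaining two cases are where bipartiteness of $\Si$ does the real work. Let $A,B$ be the biparts of $\Si$. For a vertex $u\in V\Si$ and an edge $e=\{x,y\}\in E\Si$, the endpoints $x,y$ lie in different biparts of $\Si$, so $d_\Si(u,x)$ and $d_\Si(u,y)$ have opposite parities and in particular cannot both equal $d$. Hence $\min\{d_\Si(u,x),d_\Si(u,y)\}\leq d-1$, which gives
\[
 d_\Ga(u,e)=1+2\min\{d_\Si(u,x),d_\Si(u,y)\}\leq 2d-1.
\]
For two edges $e=\{x_1,x_2\},f=\{y_1,y_2\}\in E\Si$, one has $d_\Ga(e,f)=2+2\min_{i,j}d_\Si(x_i,y_j)$, so it suffices to show $\min_{i,j}d_\Si(x_i,y_j)\leq d-1$. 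Since $x_1,x_2$ are in different biparts, and similarly for $y_1,y_2$, among the four pairs $(x_i,y_j)$ exactly two have both endpoints in the same bipart (giving even distance) and two have endpoints in different biparts (giving odd distance). Thus the four distances cannot all be equal to $d$, so the minimum is strictly less than $d$, i.e.\ at most $d-1$, yielding $d_\Ga(e,f)\leq 2d$.

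Combining the three cases gives $\diam(\Ga)\leq 2d$, and the lower bound closes the argument to yield $\diam(\Ga)=2d$. The only substantive step is the parity observation in the edge-vertex and edge-edge cases; everything else is direct from the definition of the subdivision graph, so I expect no real obstacle beyond cleanly stating these parity constraints.
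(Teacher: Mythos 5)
Your proposal is correct and follows essentially the same route as the paper: the same three-case analysis of vertex pairs in $\Ga$ via the explicit distance formulae, with bipartiteness entering through the parity of distances to the two endpoints of an edge (the paper phrases this as $|\dist_\Sigma(x,v)-\dist_\Sigma(y,v)|=1$ for an edge $\{x,y\}$, which is equivalent to your opposite-parity observation). The only cosmetic difference is your edge--edge case, where you argue the four distances cannot all equal $d$ rather than applying the vertex--edge bound twice, but this is the same idea.
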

\begin{proof}
Note that in a bipartite graph $\Sigma$, if $\{x,y\}$ is an edge, then for any vertex $v$, $|\dist_\Sigma(x,v)-\dist_\Sigma(y,v)|=1$.
For $u,v\in V\Sigma$, it is obvious that  $\dist_\Ga(u,v)=2\,\dist_\Sigma(u,v)\leq 2\,\diam(\Sigma)$.
Let  $u\in V\Sigma$ and $e=\{x,y\}\in E\Sigma$. Then  $\dist_\Ga(u,e)=2\,\min\{\dist_\Sigma(u,x),\dist_\Sigma(u,y)\}+1$. Since $|\dist_\Sigma(u,x)-\dist_\Sigma(u,y)|=1$, we have that $\min\{\dist_\Sigma(u,x),\dist_\Sigma(u,y)\}\leq \diam(\Sigma)-1$, and so $\dist_\Ga(u,e)\leq 2\,\diam(\Sigma)-1$.
Finally, let $e,e'\in E\Sigma$ where $e=\{x,y\}$ and $e'=\{x',y'\}$. Then $\dist_\Ga(e,f)=2\,\min\{\dist_\Sigma(x,x'),\dist_\Sigma(x,y'),\dist_\Sigma(y,x'),\dist_\Sigma(y,y')\}+2$. As before that minimum is less than or equal to $\diam(\Sigma)-1$, and so $\dist_\Ga(e,f)\leq 2\,\diam(\Sigma)$.
Hence $\diam(\Ga)\leq 2\,\diam(\Sigma)$. There exist  $u,v\in V\Sigma$ with $\dist_\Sigma(u,v)=\diam(\Sigma)$, so $\dist_\Ga(u,v)=2\,\diam(\Sigma)$. Thus  $\diam(\Ga)=2\,\diam(\Sigma)$.
\end{proof}

Finally, we will need the following Lemma about $(k,6)$-cages.
\begin{lem}\label{PropDes}
Let $\Sigma$ be finite, connected, undirected graph and $G\leq \Aut(\Si)$. If $\Si$ is a $G$-distance transitive $(k,6)$-cage, then 
$\Si$  is the incidence graph of a Desarguesian projective plane of order $k-1$, where $k-1$ is a prime power $q$ and $\PSL(3,q)\leq G\leq \Aut(\PSL(3,q))$.
\end{lem}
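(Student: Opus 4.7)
The plan is to combine three ingredients: a classical identification of $(k,6)$-cages with incidence graphs of projective planes, the Ostrom--Wagner theorem characterising Desarguesian planes via $2$-transitivity, and the classification of doubly transitive subgroups of $\PGaL(3,q)$.

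First I would invoke a well-known structural result for $(k,6)$-cages (see for instance \cite{HandSh1993}, where the preceding lemmas are drawn from): every $(k,6)$-cage $\Si$ is bipartite, has diameter $3$, and is isomorphic to the incidence graph $\Inc({\mathcal P})$ of a projective plane $\mathcal P$ of order $q:=k-1$; the two biparts of $\Si$ are then identified with the point set and line set of $\mathcal P$. Writing $V\Si=\PP\cup\mathcal L$ accordingly, two distinct points of $\mathcal P$ lie on a unique common line and hence are at $\Si$-distance exactly $2$, and similarly for two distinct lines; a point and a line are at $\Si$-distance $1$ or $3$ according to whether they are incident.

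Next I would extract the permutation action of $G$ on $\mathcal P$ from the distance transitivity. Since $\Si$ is $G$-distance transitive, $G$ is transitive on $V\Si$, so $G$ cannot preserve the bipartition (which would give two orbits, $\PP$ and $\mathcal L$); hence the subgroup $G^+\leq G$ preserving the bipartition has index $2$ in $G$, and $G$ contains a duality of $\mathcal P$. Now consider any two ordered pairs of distinct points $(p_1,p_2),(p_1',p_2')$: both pairs are at $\Si$-distance $2$, so by distance transitivity some $g\in G$ sends $(p_1,p_2)$ to $(p_1',p_2')$, and $g$ must preserve the bipart $\PP$, so $g\in G^+$. Thus $G^+$ acts $2$-transitively on $\PP$ (and, by the same argument applied to $\mathcal L$, also on $\mathcal L$) as a group of collineations of $\mathcal P$.

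Finally I would apply the Ostrom--Wagner theorem: a finite projective plane admitting a collineation group that is $2$-transitive on points is Desarguesian. Hence $\mathcal P\cong \PG(2,q)$ with $q$ a prime power and $\Si\cong \Inc(\PG(2,q))$, and $G^+\leq \PGaL(3,q)$. The classification of $2$-transitive subgroups of $\PGaL(3,q)$ (a CFSG-level input, but well-tabulated) yields $\PSL(3,q)\leq G^+\leq \PGaL(3,q)$. Since $G$ also contains a duality and $\Aut(\Inc(\PG(2,q)))=\Aut(\PSL(3,q))$, the conclusion $\PSL(3,q)\leq G\leq \Aut(\PSL(3,q))$ follows.

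The main obstacle is the first ingredient: converting combinatorial extremality of a $(k,6)$-cage into the algebraic structure of a projective plane. A direct proof would count vertices in distance-spheres of an edge and then verify the projective plane axioms, but in the spirit of the present paper it is cleaner to cite \cite{HandSh1993}. The other step requiring care is the final classification input, but once $G^+$ is known to be a $2$-transitive collineation group of $\PG(2,q)$, the conclusion that $G^+$ contains $\PSL(3,q)$ is standard.
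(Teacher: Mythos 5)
Your proposal is correct and follows essentially the same route as the paper: identify the $(k,6)$-cage as the incidence graph of a projective plane of order $k-1$ (the paper cites Singleton rather than Holton--Sheehan), deduce $2$-transitivity on points from distance transitivity, and apply the Ostrom--Wagner theorem. The only real difference is that you invoke the classification of $2$-transitive subgroups of $\PGaL(3,q)$ to get $\PSL(3,q)\leq G$, whereas the Ostrom--Wagner theorem already yields this containment without any CFSG-level input; your explicit treatment of the bipartition-preserving subgroup $G^{+}$ is a small point of care that the paper leaves implicit.
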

\begin{proof}
 Since $\Si$ is a $(k,6)$-cage, by \cite{Singleton1966}, $\Si$ is the incidence graph of a projective plane $\calP$ of order $k-1$.  
For all points $p,q, p',q'\in \calP$, we have  $d_{\Si}(p,q)=d_{\Si}(p',q')=2$ and since $\Si$ is $G$-distance transitive, there exists $\alpha\in G$ such that $(p,q)^{\alpha}=(p',q')$. 
Hence $G$ is $2$-transitive on the points of $\calP$. Therefore, by the  Ostrom-Wagner Theorem \cite{OW}, $\calP$ is Desarguesian, $k-1$ is a prime power $q$, 
and $\PSL(3,q)\leq G\leq \Aut(\PSL(3,q))$. %Conversely let  $\Si$  be the incidence graph of a Desargusian projective plane of order $k-1$, where $k-1$ is a prime power $q$ and $\L_{3}(q)\leq G\leq \Aut(\L_{3}(q))$. Then the vertices of $\Si$ are the points and lines of $\PG(2,k-1)$. So $|V\Si|=2(k^2-k+1)$. (Also) any two points $<u>$ and $<v>$ of  $\PG(2,k-1)$,  are the $1$-dimensional subspaces of $\PG(2,k-1)$ which are incidence with the lines $<u,v>$, $<u,w>$ and $<v,w>$. Hence the girth of $\Si$ is at most $6$. Since $<u>$ and $<v>$ are incidence with the exactly one lines $<u,v>$, the girth of $\Si$ is greater than $4$. Since $\Si$ is bipartite, $g=6$ and $d=3$. We know that $2(k^2-k+1)$ is the number of vertices of a $(k,6)$-cage, So $\Si$ is a $(k,6)$-cage. Also  since by \cite{Li4arc}, $\Si$ is $(G, 4)$-arc transitive and $d=3$, $\Si$ is $G$-distance transitive.
\end{proof}

\section{Proof of main theorem}\label{proof}

In this section, we prove Theorems~\ref{char}.

In what follows, suppose that $\Sigma$ is a connected graph with diameter $d\geq 2$, girth $g$ and $G\leq
\Aut(\Sigma)$. Also let
 $\ssig$ be the subdivision graph of $\Si$. Set $D:=\diam(\ssig)$.

\subsection{(a)$\Longrightarrow$ (b)} Suppose that $\ssig$ is locally $(G,2d)$-distance transitive. By Lemma \ref{lem:line}, $G$ is transitive on the vertices of $\Si$, and so $\Si$ is regular,
with valency $k$.

Suppose first that $\Si$ has valency $k=2$. Since $\Si$ is connected, $\Si$ is a cycle $C_n$ ($n\geq 3$), which has diameter $d=\lfloor n/2\rfloor$ and girth $n$. We have that $\ssig$ is a cycle $C_{2n}$.  
Since $\ssig$ is locally $(G,2d)$-distance transitive, it is easy to see that $G$ must be $D_{2n}=\Aut(\Si)$. 

From now on we assume that the valency $k$ is at least $3$.  By Lemma \ref{basic},  we have that $\Si$ is a $(k,g)$-cage and $g\in \{3,4,5,6,8,12\}$. 
By Lemma \ref{lem:d-arc}, if $g$ is even, then $d=g/2$, and if $g$ is odd, then $d=(g-1)/2$. Moreover, $g\geq 2d\geq 4$. 

Suppose $g=4$. Then $d=2$. Since $\ssig$ is locally $(G,4)$-distance transitive, this case was covered in \cite[Theorem 1.4]{DDP2009}, where we proved that $\Sigma=K_{n,n}$ and 
$G$ is a subgroup of $S_{n}\wr S_{2}$ satisfying Condition (*), see the introduction. Since $k\geq 3$, we have $n\geq 3$. Since $\Si$ is bipartite, $D=4$ by Lemma \ref{doublediam}.

Suppose $g=5$. Then $d=2$ and  $\Si$ is a $(k,5)$-cage.  By \cite{Hoff&Sing}, $k=3$ or $7$.  If $k=3$, then $\Sigma$ is the Petersen graph. 
We have seen in \cite{DDP2009} that if the subdivision graph of the Petersen graph is locally $(G,4)$-distance transitive,  then $G=S_5$. If $k=7$, then $\Sigma$ is the Hoffman-Singleton graph $\HoSi$. 
We have seen in \cite{DDP2009} that if the subdivision graph of $\HoSi$ is locally $(G,4)$-distance transitive,  then $G=PSU(3,5)$ or $ PSU(3,5):2$.

Now suppose $g\in\{6,8,12\}$. Then $d=g/2$, and by Lemma \ref{lem:d-arc} $\Si$ is $(G,s)$-transitive for some $s\geq d$. Therefore $g=2d\leq 2s$.
If $g=8,12$, we know $s\geq 4$. If $g=6$, suppose $s\geq 4$. Then  we can use Theorem \ref{Weiss}. 
In particular $d\leq s\leq \frac{g+2}{2}=d+1$. Only the first four lines of Table \ref{tab:Weiss} can happen here, so $\Si$ is the incidence graph of a generalised polygon. 
It follows then that $g=2d$. Moreover, since $\Si$ is bipartite, $D=2d$ by Lemma  \ref{doublediam}. We also know that $G$ contains 
a duality, since $G$ is transitive on $V\Si$.  If we are in Line 1, 2, or 4 of Table \ref{tab:Weiss}, we are done.

Suppose we are in the case of Line 3, that is $\Si=\Inc(W(3,2))$, $s=4$ and $G'=A_6$. Then  $G=S_6, \PSp(4,2), \PGL(2,9)$ or $M_{10}$. The first two cases cannot happen because they don't contain a duality 
of $W(3,2)$. 
Consider the case of $G=\PGL(2,9)$. We claim that $\PGL(2,9)_{e}$ has two orbits on  $\ssig_{8}(e)$, for $e$ an edge of $\ssig$, and consequently $\ssig$ is not locally $(PGL(2,9),8)$-distance transitive.. 
Note that $e$ represents a chamber (that is, a point-line flag) of the generalized quadrangle $W(3,2)$, and $\ssig_{8}(e)$ consists of the chambers opposite $e$ 
(in the sense of the generalised polygons/buildings). The action of $\PGL(2,9)$ on chambers can be seen as the action on the 45 unordered pairs of the projective line $\PG(1,9)$, with two pairs 
$\{a,b\}$ and $\{c,d\}$ representing opposite chambers if they are disjoint and the cross-ratio $(a,b;c,d)$ is a non-square in $\GF(9)$, see \cite{GoVM}. then it is straightforward to see that $G_e$,
that is, the stabiliser of a pair in  $\PGL(2,9)$ has order 16 and two orbits of size 8 on opposite chambers, that is, $\ssig_{8}(e)$. Notice that the action of $A_6\cong \PSL(2,9)$ on chambers of $W(3,2)$ can
be described similarly. In this case the stabiliser of an unordered pair has order 8 and is semi-regular, so  $\PGL(2,9)_{e}$ and  $\PSL(2,9)_{e}$ have the same two orbits on  $\ssig_{8}(e)$.
Hence we must have $G=M_{10}$. 

The only remaining case is if  $g=6$, and $\Si$ is $(G,3)$-transitive.
Note that $\Si$ is a $(k,6)$-cage. By Lemma \ref{lem:line}, $\Si$ is $G$-distance transitive, and so by Lemma \ref{PropDes}, $\Sigma$ is the incidence graph of a Desarguesian projective plane 
of order $q=k-1$ and   $\PSL(3,q)\leq G\leq \Aut(\PSL(3,q))$.  But, by \cite{Li4arc}, $\Sigma$ is $(G,4)$-arc transitive in this case, a contradiction.

\subsection{(b)$\Longrightarrow$ (c)}
For the first three lines of Table \ref{table1}, we have proved in \cite[ Examples 5.5, 5.6 and 5.3]{DDP2009} that  $\ssig$ is locally $G$-distance transitive.

Suppose we are in the case of Line 4, 5, or 7 of Table \ref{table1}. 
Then, by  \cite{Li4arc}, $\Si$ is $(G,d+1)$-arc transitive. Hence, by  \cite[Theorem 1.2]{DDP2009}, $\ssig$ is locally $(G,2d+1)$-arc transitive. 
Since $D=2d$, $\ssig$ is locally $G$-distance transitive. 

Suppose we are in the case of Line 6, so $D=8$.  We know that $\Si$ is $(G,4)$-arc transitive, and so, by \cite[Theorem 1.3]{DDP2009}, $\ssig$ is locally $(G,7)$-distance transitive. 
Let $v\in V\Sigma$ and  $e\in E\Sigma$.
We have  $\ssig_{8}(v)=\Si_4(v)$, and so $G=M_{10}$ is transitive on  $\ssig_{8}(v)$, since  $\Si$ is $(G,4)$-arc transitive. 
So if we prove that  $G_{e}$ is transitive on $\ssig_{8}(e)$, then we will have that $\ssig$ is locally $G$-distance transitive. 
Suppose that $G_{e}$ is not transitive on $\ssig_{8}(e)$. We have seen above that the subgroup $\PSL(2,9)_e$ of $G_e$ has two orbits of size 8 on $\ssig_{8}(e)$, 
so we must have that $G_e$ has those same two orbits. We have also seen that $\PGL(2,9)_e$ has those same two orbits.
We know that $\langle G,\PGL(2,9)\rangle\cong \PGaL(2,9)\cong\Aut(\PSp(4,2))$, and so $\PGaL(2,9)_e$ also has those same two orbits on $\ssig_{8}(e)$. This means that $\ssig$ is not 
locally $\Aut(\PSp(4,2))$-distance transitive, which contradicts the previous paragraph. Thus we have that  $G_{e}$ is transitive on $\ssig_{8}(e)$.

It is obvious that if $\Si=C_n$ and $G=D_{2n}$ (Line 8), then $\ssig$ is locally $G$-distance transitive. 

\subsection{(c)$\Longrightarrow$ (a)} Since $D\geq 2d$, this implication is trivial.

\section{Acknowledgments}
The authors thank Hendrik Van Maldeghem for the argument regarding the transitivity of an edge stabiliser in $M_{10}$ and $\PGL(2,9)$. 

%If Cheryl re-reads this, thank Cheryl.


\begin{thebibliography}{1}
 


\bibliographystyle{amsplain}
\bibitem{Biggs2}
Norman Biggs, \emph{The symmetry of line graphs}, Utilitas Math. \textbf{5}
  (1974), 113--121. 

\bibitem{DDP2009}
Daneshkhah, A., Devillers A. Praeger~C.E., \emph{Symmetry properties of
  subdivision graphs}, submitted.

\bibitem{GoVM}
E. Govaert, H. Van Maldeghem, \emph{Distance-preserving maps in generalized polygons. I. Maps on flags},
Beiträge Algebra Geom. \textbf{43} (2002), no.~1, 89--110. 

\bibitem{Hoff&Sing}
A.~J. Hoffman and R.~R. Singleton, \emph{On {M}oore graphs with diameters {$2$}
  and {$3$}}, IBM J. Res. Develop. \textbf{4} (1960), 497--504. 
\bibitem{HandSh1993}
D.~A. Holton and J.~Sheehan, \emph{The {P}etersen graph}, Australian
  Mathematical Society Lecture Series, vol.~7, Cambridge University Press,
  Cambridge, 1993. 

\bibitem{Li4arc}
Cai~Heng Li, \emph{The finite vertex-primitive and vertex-biprimitive
  {$s$}-transitive graphs for {$s\geq 4$}}, Trans. Amer. Math. Soc.
  \textbf{353} (2001), no.~9, 3511--3529 (electronic).  

\bibitem{OW}
T. Ostrom, A. Wagner, \emph{On projective and affine planes with transitive collineation groups}, Math. Z \textbf{71} (1959), 186--199. 

\bibitem{Singleton1966}
Robert Singleton, \emph{On minimal graphs of maximum even girth}, J.
  Combinatorial Theory \textbf{1} (1966), 306--332. 

\bibitem{Weiss85}
Richard Weiss, \emph{Distance-transitive graphs and generalized polygons},
  Arch. Math. (Basel) \textbf{45} (1985), no.~2, 186--192. 
\end{thebibliography}
\end{document}